\newtheorem{theorem}{Theorem}[section]
\newtheorem{corollary}[theorem]{Corollary}
\newtheorem{definition}[theorem]{Definition}
\title{Investigations of Impartial Games with a Pass}
\author{Emet Hirsch}
\address{Euler Circle, Palo Alto, CA 94306}
\email{emet.hirsch@gmail.com}
\date{\today}
\begin{document}

\begin{abstract}
    The study of the combinatorial game Nim and its variants is rich and varied, but little is known of the game Nim with a Pass. It is Nim, but once per game a player is permitted to skip their turn but this can only be done if a nonempty pile remains. In this paper we define a new binary operation on games which we use to prove novel properties of Nim with a Pass, as well as games which are generalizations of it. Most importantly, we make small progress on finding the $\mathcal{P}$-positions and optimal strategies of the game.
\end{abstract}

\maketitle

\section{Preliminaries and Fundamentals}
      Combinatorial game theory has always been exemplified by \textsc{Nim}, the first combinatorial game studied in the modern history of the field. Despite \textsc{Nim} having been solved over a century ago, variants of the game and extensions of it continue to interest mathematicians and game hobbyists alike.

Normally, combinatorial games such as \textsc{Nim} are played with two players alternating moves until one player has no legal moves, at which point that player is the loser.  \textsc{Nim} is a game consisting of a number of piles of stones. A turn consists of a player removing one or more stones from a single pile. We consider the game \textsc{Nim with a Pass}, i.e. \textsc{Nim} but there is a special move called \textit{pass} that can only be taken once (by either player) but may not be taken when there are no other legal moves. In the case of \textsc{Nim}, there are no legal moves when all piles are empty. The structure of games with passes is not well understood, and even the notoriously simple \textsc{Nim} becomes complex and dynamic when given a pass \cite{dynam}. General results about games with passes are few and far between. Existing research on \textsc{Nim with a Pass} focuses mostly on establishing patterns and completely characterizing the structure of small cases \cite{map,atlas}. Existing research on the dynamics of games with passes (and generalizations of such) in general has mostly investigated the problem from the perspective of the pass altering the traditional outcome function of a game, rather than altering the game itself. One example of this is the study of the pass as a type of perturbation in the outcome function on games, represented as a recurrence relation to permit the use of dynamical systems theory \cite{dynam}.

In this paper, we present a rigorous definition of a game with a pass, then extend the core idea of this definition to create a binary operation on games, called the \emph{split sum}. Then we will investigate and prove some of the properties of the split sum. The most important property that we will consider affects the losing positions of split sum games. Then, we investigate the applications of this result (and a corollary of it) in the study of \textsc{Nim with a Pass}.

In general, the notation used by this paper will mirror the common notation used in \cite{conway}. To denote a game of \textsc{Nim} with piles of sizes $a_1,a_2,\ldots, a_n$, we use the ordered tuple notation $(a_1,a_2,\ldots, a_n)$. To describe games in general, we use Conway's option notation. We are exclusively interested in \textit{impartial games}, i.e.\ games where each player has the same possible moves. So $\{g_1,g_2,\ldots,g_n\}$ simply denotes the impartial game $G$ with options $\{g_1,g_2,\ldots,g_n\}$. The symbol + refers to the disjunctive sum of games, as is standard notation. We write $g$ for an option of $G$, and $g'$ for an option of an option of G. We write $g \in G$ to say that $g$ is an option of $G$. A game's \textit{birthday} is the height of its game tree or, equivalently, its position in the Conway hierarchy of games. A game of normal \textsc{Nim} has birthday equal to its number of stones. We occasionally write $b(G)$ for the birthday of game $G$. A game's \textit{Grundy value} refers to the unique one-pile game of \textsc{Nim} to which it is equal, due to the Sprague--Grundy Theorem \cite{conway}. Similarly, a \textit{nimber} is any game of one-pile \textsc{Nim}, and we refer to a nimber by $*n$. It should be noted that every game of \textsc{Nim} is simply a disjoint sum of nimbers. Sometimes, we will have to refer to an option of a game denoted by a lowercase letter or a nimber; we will also use $'$ in this case. For example, an option of $*n$ would be written $*n'$.

Due to the behaviour of games with passes, this paper will have a special notational concern. Normally in combinatorial game theory, we say two games $G$ and $H$ are \textit{equal} for when all games $X$, we have $o(G+X)=o(H+X)$. $o(G)$ for some game $G$ simply refers to the status of $G$ as a $\mathcal{P}$-position or $\mathcal{N}$-position. So this definition states that for all other games $X$, the same player wins the disjoint sum of $G$ and $X$ and the disjoint sum of $H$ and $X$. We continue this notation. However, we have to be careful to distinguish equality from the stronger claim that games $G$ and $H$ are equivalent. We say two games are \textit{equivalent} if the have exactly the same set of options. That is to say, $G$ and $H$ are equivalent if for every element in $G$, there is exactly one equivalent element in $H$ and vice versa. This distinction is necessary because giving a game the pass is not well-defined with respect to the standard equivalence relation on the class of games. When two games $G$ and $H$ are equivalent we write $G \equiv H$. In standard notation, 0 refers to any $\mathcal{P}$-position. That is to say, any $\mathcal{P}$-position is often written as 0 even when that $\mathcal{P}$-position has options. We write $G=0$ when $G$ is a $\mathcal{P}$-position as normal, but we write $\mathbf{E}$, standing for ``empty," to distinguish the game with no options.

Firstly, we must capture the nature of the pass with a rigorous definition using Conway's option notation. We can take a game $G$ and consider $G$ \textit{with a pass}, a game where the following moves are legal:

\begin{enumerate} \item A move in $G$, which reduces it to one of its options while the pass remains.

\item \textit{Taking the pass}, which reduces the game $G$ with a pass to simply $G$. This move is not permitted when $G$ has no options left. \end{enumerate}

We write $G$ with a pass as $G^\ast$. Understanding the intuitive definition of games with a pass, we now seek to rigorously define a function that takes a game $G$ to the corresponding game with a pass.
\begin{definition}
\label{passdefinition}
\[G^\ast \equiv \begin{cases} 
      \mathbf{E} & \text{when } G\equiv \mathbf{E} .\\
      \{G, g^\ast\} \text{ where $g$ ranges over the options of $G$} & \text{otherwise.}
   \end{cases}
\]
\end{definition}

The definition takes \textbf{E} to itself because the pass cannot be used when there the game has no remaining moves. It takes all other games $G$ to a game with options of two types:

\begin{enumerate} \item \textit{Passing}, which goes to $G$.

\item Make a move in $G$, which goes to an option of $G$ that has the pass recursively applied to it because the pass can still be taken. \end{enumerate}

Thus, this definition captures the notion of giving a game the pass.

Now is a good time to justify our peculiar notation. Observe that the pass is not well-defined with respect to the normal definition of equality of games. As an example, consider the following. The game with no options is a $\mathcal{P}$-position, and continues to be a $\mathcal{P}$-position when given a pass. The game of \textsc{Nim} with two piles of size one is a $\mathcal{P}$-position, but that game with a pass is not, and is in fact equal to the game of \textsc{Nim} with one pile of size one. We might write this as follows: \[\mathbf{E}=0 \qquad\text{and} \qquad (1,1)=0.\]
However, \[\mathbf{E}^*=0 \qquad \text{and} \qquad (1,1)^*=(1).\]

As we have previously stated, $(1,1)$ refers to the game of \textsc{Nim} with two piles of size one. The values used as an example can be verified by computation.

\section{Split Sums}

One of the reasons that games with a pass are of interest is because giving a game a pass is very similar to simply considering its disjoint sum with $*1$, the game of \textsc{Nim} with one pile of size one. However, its initial value at \textbf{E} is perturbed and, since the operation is defined recursively, the behaviour of the operation becomes very complex. We call this operation the \textit{split sum} and denote the split sum of $G$ and $H$ by $G \circ H$. In the following definition, $g$ ranges over the options of $G$ and $h$ ranges over the options of $H$.

\begin{definition} 
\label{splitdefinition}
\[G \circ H =
 \begin{cases} 
      \mathbf{E} & \text{when } G \equiv \mathbf{E}, \\
      $G$ & \text{when } H \equiv \textbf{E}, \\
      
      \{G \circ h, g \circ H\}  & \text{otherwise.}
   \end{cases}
\]
\end{definition}

Note that this definition is nearly exactly as the same as the definition for addition (disjoint sum) of games. The only difference is that, when $G \equiv \textbf{E}$, the definition mandates that the output be \textbf{E}. In normal addition, the recursive part at the bottom would apply and the output would be $H$.

To intuitively understand split sums, consider the following description of the pass function of games. ``A game G with a pass is like $G+*1$, but a player is only allowed to play in the $*1$ component if there are still legal moves in the G component." The intuitive nature of $G \circ H$ is simply $G+H$, but players are only allowed to play moves in $H$ if there are still legal moves in $G$.

Before we begin to prove properties of split sums, we should note: $G \circ *1 \equiv G^*$. Split sums are motivated by a desire to generalize the notion of games with a pass. There are other classes of games or functions on games that seem to generalize the concept of a pass, but they are often extremely difficult to work with and compute. While this is still true for split sums, they have some surprisingly symmetrical properties that mirror those of normal disjunctive sums.

 To begin, let us note that split sums are neither commutative nor associative. The first of the following two pairs of equations demonstrate that split sums are not commutative, while the second demonstrates that they are not associative. They can be verified by computation.
\[
    *1 \circ \textbf{E} \equiv *1, \text{ but } \textbf{E} \circ *1 \equiv \textbf{E} .
\]
\[
    (*1 \circ \textbf{E}) \circ *1 \equiv *2, \text{ but } *1 \circ ( \textbf{E} \circ *1) \equiv *1.
\]

\begin{theorem}
\label{Theorem 2.2}
If G and H are $\mathcal{P}$-positions, then $G \circ H$ is also a $\mathcal{P}$-position.
\end{theorem}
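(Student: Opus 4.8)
The plan is to induct on the sum of birthdays $b(G) + b(H)$, relying on the elementary characterization of outcome classes: a position is a $\mathcal{P}$-position precisely when every one of its options is an $\mathcal{N}$-position, and it is an $\mathcal{N}$-position precisely when it has at least one $\mathcal{P}$-position option. Two base cases come directly from Definition \ref{splitdefinition}. If $G \equiv \mathbf{E}$ then $G \circ H \equiv \mathbf{E}$, which is a $\mathcal{P}$-position; and if $H \equiv \mathbf{E}$ (with $G \not\equiv \mathbf{E}$) then $G \circ H \equiv G$, which is a $\mathcal{P}$-position by hypothesis. In either case we are done, so for the inductive step I may assume that neither $G$ nor $H$ is $\mathbf{E}$.

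In this case $G \circ H \equiv \{G \circ h,\, g \circ H\}$, where $g$ and $h$ range over the options of $G$ and $H$. To conclude that $G \circ H$ is a $\mathcal{P}$-position, I would show that every option of each of the two forms is an $\mathcal{N}$-position. Consider first an option $g \circ H$. Since $G$ is a $\mathcal{P}$-position, its option $g$ is an $\mathcal{N}$-position, so $g$ has some option $g'$ that is a $\mathcal{P}$-position. The idea is to exhibit $g' \circ H$ as a $\mathcal{P}$-position option of $g \circ H$: both $g'$ and $H$ are $\mathcal{P}$-positions, and $b(g') + b(H) < b(G) + b(H)$, so the inductive hypothesis makes $g' \circ H$ a $\mathcal{P}$-position, whence $g \circ H$ is an $\mathcal{N}$-position. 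The option $G \circ h$ is handled by the mirror-image argument: $h$ is an $\mathcal{N}$-position (an option of the $\mathcal{P}$-position $H$), it has a $\mathcal{P}$-position option $h'$, and $G \circ h'$ is a $\mathcal{P}$-position by the inductive hypothesis since $b(G) + b(h') < b(G) + b(H)$. This makes $G \circ h$ an $\mathcal{N}$-position as well, and since both option-types are $\mathcal{N}$-positions, $G \circ H$ is a $\mathcal{P}$-position.

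The step that requires care --- and the one I expect to be the main obstacle --- is verifying that the advertised responses $g' \circ H$ and $G \circ h'$ are genuinely options of $g \circ H$ and $G \circ h$. Because the first clause of Definition \ref{splitdefinition} collapses any split sum to $\mathbf{E}$ whenever its left argument is empty, I must confirm that $g \not\equiv \mathbf{E}$ and $h \not\equiv \mathbf{E}$, so that these split sums fall under the recursive third clause and actually present the options I want. This is exactly where the non-commutativity of $\circ$ makes the two cases feel asymmetric, but one underlying fact rescues both: an $\mathcal{N}$-position must have an option and therefore cannot equal the option-less game $\mathbf{E}$. Since $g$ and $h$ were produced as $\mathcal{N}$-positions, neither is $\mathbf{E}$, and together with $G, H \not\equiv \mathbf{E}$ the recursive clause applies to both $g \circ H$ and $G \circ h$, legitimizing the responses. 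I would record this non-emptiness observation as a small preliminary remark before running the induction. Intuitively the whole argument formalizes a mirroring strategy: whatever component the opponent disturbs, the second player restores that same component to a $\mathcal{P}$-position, and the empty-left-argument convention never obstructs this because a component is only ever moved into while it is still nonempty.
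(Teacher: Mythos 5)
Your proof is correct and follows essentially the same route as the paper's: the paper phrases it as a minimal-counterexample/strategy argument (Player 2 restores whichever component Player 1 disturbed to a $\mathcal{P}$-position of smaller birthday-sum), which is just the contrapositive packaging of your direct induction on $b(G)+b(H)$. Your explicit check that $g$ and $h$ are nonempty, so that the recursive clause of the definition actually supplies the responses $g' \circ H$ and $G \circ h'$, is a detail the paper leaves implicit.
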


\begin{proof} As base cases, we know by definition that
\begin{gather*}
    G \circ \textbf{E} \equiv G \text{ and } \textbf{E} \circ H \equiv \textbf{E}.
\end{gather*}

Now, consider the $\mathcal{P}$-positions $G$ and $H$ such that their split sum is an $\mathcal{N}$-position and the sum of their birthdays is minimal among all pairs of games with that property. (That is, $G \circ H$ is an $\mathcal{N}$-position and $b(G)+b(H)$ is minimal.)
By our base case, both of those games must have options. Since the game is an $\mathcal{N}$-position, Player 1 must have a winning move. If that winning move is in $H$, then it goes to $h$. Since $H$ (by assumption) is a $\mathcal{P}$-position, $h$ is an $\mathcal{N}$-position, and Player 2 can now move it to some $\mathcal{P}$-position $h'$. But now Player 1 is left with the game $G \circ h'$, which must be a $\mathcal{P}$-position because $G$ and $h'$ are $\mathcal{P}$-positions and $h'$ has strictly less birthday than $H$, meaning our theorem applies to it.

Thus, we can reduce to the case where Player 1's winning move is in $G$. If that winning move is in $G$, then it goes to $g$. Since $G$ (by assumption) is a $\mathcal{P}$-position, $g$ is an $\mathcal{N}$-position, and Player 2 can now move it to some $\mathcal{P}$-position $g'$. But now Player 1 is left with the game $g' \circ H$, which must be a $\mathcal{P}$-position because $G$ and $h'$ are $\mathcal{P}$-positions and $g'$ has strictly lower birthday than $G$, meaning our theorem applies to it. But that means Player 1 does not have a winning strategy, which is a contradiction.
\end{proof}

\begin{corollary}
If $G$ is a $\mathcal{P}$-position and $H$ is an $\mathcal{N}$-position (or vice versa), $G \circ H$ is an $\mathcal{N}$-position.
\end{corollary}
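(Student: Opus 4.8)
The plan is to split into the two cases indicated by the ``or vice versa'' and to invoke Theorem~\ref{Theorem 2.2} as a black box in each. The facts I would use repeatedly are the standard ones: an $\mathcal{N}$-position always has at least one option that is a $\mathcal{P}$-position, while a $\mathcal{P}$-position has all of its options $\mathcal{N}$-positions. Because the split sum is not commutative, I cannot reduce one case to the other by symmetry; instead I would argue each case directly from the option structure $\{G \circ h, g \circ H\}$ supplied by Definition~\ref{splitdefinition}.

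First I would treat the case where $G$ is a $\mathcal{P}$-position and $H$ is an $\mathcal{N}$-position. Since $H$ is an $\mathcal{N}$-position it has some option $h$ that is a $\mathcal{P}$-position. Assuming $G \not\equiv \mathbf{E}$, so that the recursive clause of Definition~\ref{splitdefinition} applies, the game $G \circ h$ is one of the options of $G \circ H$. As $G$ and $h$ are both $\mathcal{P}$-positions, Theorem~\ref{Theorem 2.2} gives that $G \circ h$ is a $\mathcal{P}$-position. Hence Player~1 has a move from $G \circ H$ to a $\mathcal{P}$-position, which is precisely the assertion that $G \circ H$ is an $\mathcal{N}$-position.

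Next I would treat the case where $G$ is an $\mathcal{N}$-position and $H$ is a $\mathcal{P}$-position. Here $G$ has some option $g$ that is a $\mathcal{P}$-position. If $H \equiv \mathbf{E}$ then $G \circ H \equiv G$ is already an $\mathcal{N}$-position and there is nothing to prove; otherwise $g \circ H$ is an option of $G \circ H$, and since $g$ and $H$ are both $\mathcal{P}$-positions, Theorem~\ref{Theorem 2.2} shows $g \circ H$ is a $\mathcal{P}$-position. Once again Player~1 can move to a $\mathcal{P}$-position, so $G \circ H$ is an $\mathcal{N}$-position.

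The main obstacle is the boundary behaviour at $\mathbf{E}$, which is exactly where the asymmetry of the split sum matters. In the first case the argument genuinely needs $G \not\equiv \mathbf{E}$: if $G \equiv \mathbf{E}$ then $\mathbf{E} \circ H \equiv \mathbf{E}$ is a $\mathcal{P}$-position, so the statement must be read with the convention that the left argument is nonempty (equivalently, that we are outside the degenerate base clause of Definition~\ref{splitdefinition}). I would flag this hypothesis explicitly at the start of the first case. Everything else follows immediately once the correct option is fed into Theorem~\ref{Theorem 2.2}, so after the case split and the $\mathbf{E}$ convention are settled the proof is routine.
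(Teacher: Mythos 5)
Your proof takes essentially the same approach as the paper: move in whichever component is the $\mathcal{N}$-position to one of its $\mathcal{P}$-position options, observe that the result is a split sum of two $\mathcal{P}$-positions, and apply Theorem~\ref{Theorem 2.2}. Your explicit treatment of the boundary case $G \equiv \mathbf{E}$ (where $\mathbf{E} \circ H \equiv \mathbf{E}$ is a $\mathcal{P}$-position, so the corollary as literally stated fails) is a genuine improvement over the paper's proof, which passes over this degenerate case in silence.
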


\begin{proof} It suffices to show a move to a $\mathcal{P}$-position. Whichever element of the sum is an $\mathcal{N}$-position has some move to a $\mathcal{P}$-position. If player takes this move, the resulting game is a split sum of two $\mathcal{P}$-positions, and therefore a $\mathcal{P}$-position by Theorem \ref{Theorem 2.2}. Thus, this move is a winning move for Player 1, therefore the game is an $\mathcal{N}$-position.
\end{proof}

\begin{theorem}
\label{strangeassociativity}
Let $G$, $H$, and $K$ be games. Then 
    \[(G  \circ H)  \circ K = G \circ (H+K).\]
\end{theorem}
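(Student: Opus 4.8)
The plan is to prove the identity by induction on the combined birthday $b(G)+b(H)+b(K)$, expanding both sides with Definition \ref{splitdefinition} together with the definition of the disjunctive sum, and then matching options term by term. In fact I expect to obtain the slightly stronger statement that the two sides are \emph{equivalent} ($\equiv$), from which equality follows since equivalent games share the same options and hence the same outcome in every context; equality alone can also be read off directly, since the disjunctive sum is a congruence for $=$ and two games whose options are pairwise equal are themselves equal.

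First I would dispose of the base cases, where at least one of $G$, $H$, $K$ is $\mathbf{E}$. If $G \equiv \mathbf{E}$, then both sides reduce to $\mathbf{E}$ by the first clause of Definition \ref{splitdefinition}. If $G \not\equiv \mathbf{E}$ but $K \equiv \mathbf{E}$, the left side is $(G \circ H) \circ \mathbf{E} \equiv G \circ H$, while on the right $H + \mathbf{E} \equiv H$ (the empty game is an identity for $+$), so $G \circ (H + \mathbf{E}) \equiv G \circ H$ as well. The symmetric case $H \equiv \mathbf{E}$, $K \not\equiv \mathbf{E}$ uses $\mathbf{E} + K \equiv K$ and $G \circ \mathbf{E} \equiv G$ to reduce both sides to $G \circ K$.

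For the main case, with $G$, $H$, $K$ all nonempty, I would expand both sides. The left side $(G \circ H) \circ K$ has options obtained by moving in $K$, giving $(G \circ H) \circ k$, and by moving in $G \circ H$, which itself offers $G \circ h$ and $g \circ H$; the resulting options are
\[ (G \circ H) \circ k, \qquad (G \circ h) \circ K, \qquad (g \circ H) \circ K. \]
The right side $G \circ (H+K)$ has options $g \circ (H+K)$ from moving in $G$, and $G \circ (H+K)'$ from moving in $H + K$, where $(H+K)'$ ranges over $h + K$ and $H + k$; these are
\[ G \circ (H+k), \qquad G \circ (h+K), \qquad g \circ (H+K). \]
The induction hypothesis, applied to the triples $(G,H,k)$, $(G,h,K)$, and $(g,H,K)$ --- each of strictly smaller combined birthday --- yields $(G \circ H) \circ k = G \circ (H+k)$, $(G \circ h) \circ K = G \circ (h+K)$, and $(g \circ H) \circ K = g \circ (H+K)$, which is exactly a term-by-term matching of the two option families. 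Hence the two games have pairwise-equal options and are therefore equal.

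The main obstacle I anticipate lies not in the generic case, where the recursion matches cleanly, but in the bookkeeping of the base cases: the left side collapses through the $\circ$-identity $A \circ \mathbf{E} \equiv A$ whereas the right side collapses through the $+$-identity $H + \mathbf{E} \equiv H$, and one must verify that these two different simplifications land on the same game. A related subtlety is that $\circ$ is \emph{not} well-defined with respect to $=$, so the step ``pairwise-equal options implies equal games'' must be justified through the disjunctive-sum congruence for $=$, or else circumvented by carrying the stronger relation $\equiv$ throughout; in the latter route, care is needed to check that the matching of the three option families is a genuine bijection respecting multiplicities under the equivalence relation defined above.
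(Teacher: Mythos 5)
Your proof is correct and is essentially the same argument as the paper's: an induction on the combined birthday in which the three option families of $(G\circ H)\circ K$ are matched one-for-one with those of $G\circ(H+K)$ via the inductive hypothesis (the paper phrases this as Player 2 making the ``corresponding move'' in the sum of the two sides, which is the same correspondence). If anything, yours is more complete, since you treat the base cases where only one of $G$, $H$, $K$ is $\mathbf{E}$ and make the option matching explicit, both of which the paper leaves implicit.
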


\begin{proof} First, let us establish our base case. Let all three games be \textbf{E}. In this case, the equality holds.

Now, consider the games $G$, $H$, and $K$ that violate the equality such that $b(G)+b(H)+b(K)$ is minimal. Two games being equal is equivalent to their sum being a $\mathcal{P}$-position; thus it suffices to prove 
 \begin{gather*}
    (G  \circ H)  \circ K + G \circ (H+K) = 0.
\end{gather*}

For this to be false, it must be an $\mathcal{N}$-position instead. Thus, Player 1 must have a winning strategy. That winning strategy might be in the first part of the sum, or in the second part. Player 2 can simply make the corresponding move in the opposite part of the sum, leaving a game of similar form. But this game that Player 2 left must be a $\mathcal{P}$-position due to its lower birthday-sum, a contradiction.
\end{proof}
\begin{theorem}
Let $G$, $H$, and $K$ be games and let $H=K$. Then $G \circ H = G \circ K$.
\end{theorem}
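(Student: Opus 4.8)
The plan is to convert the equality into a statement about outcomes and then argue by induction, mirroring the minimal-counterexample style of Theorems \ref{Theorem 2.2} and \ref{strangeassociativity}. Since two impartial games are equal precisely when their disjoint sum is a $\mathcal{P}$-position, it suffices to prove that $(G \circ H) + (G \circ K)$ is a $\mathcal{P}$-position whenever $H = K$. I would prove this by strong induction on $b(G) + b(H) + b(K)$, in each case describing how the second player can answer any move of the first player so as to reach a position of the same shape $(G' \circ H') + (G' \circ K')$ with $H' = K'$ and strictly smaller birthday sum, which is a $\mathcal{P}$-position by the induction hypothesis. The base case is $G \equiv \mathbf{E}$, where both split sums reduce to $\mathbf{E}$ and the total $\mathbf{E} + \mathbf{E}$ has no moves and is thus a $\mathcal{P}$-position; so in the inductive step I may assume $G$ has options.

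Every first-player move lies in one of the two split sums and is of one of two kinds: a move in the $G$-component, sending $G \circ H$ to $g \circ H$ (or $G \circ K$ to $g \circ K$), or a move in the second slot, sending $G \circ H$ to $G \circ h$ (or $G \circ K$ to $G \circ k$). A convenient first observation is that $g \circ H = g$ whenever $H \equiv \mathbf{E}$, so a move in the $G$-component can always be written uniformly as a move to $g \circ H$, whether or not $H$ has options. For such a move the second player simply mirrors it in the other split sum, producing $(g \circ H) + (g \circ K)$; here $H = K$ is unchanged and $b(g) < b(G)$, so the induction hypothesis applies and this is a $\mathcal{P}$-position.

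The main obstacle is a move in the second slot, say $G \circ H \to G \circ h$, since $H$ and $K$ are only equal and not equivalent, so there need not be any matching move in the $K$-component. The idea is to use the equality $H = K$ itself: it means $H + K$ is a $\mathcal{P}$-position, so the position $h + K$ reached by the move $H \to h$ is an $\mathcal{N}$-position and therefore has a winning move to a $\mathcal{P}$-position. That winning move is either a move $K \to k$ with $h = k$, in which case the second player replies $G \circ K \to G \circ k$ to reach $(G \circ h) + (G \circ k)$ with $h = k$, or a move $h \to h'$ with $h' = K$, in which case the second player replies $G \circ h \to G \circ h'$ to reach $(G \circ h') + (G \circ K)$ with $h' = K$. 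In both cases the birthday sum strictly decreases while the equal-pair hypothesis is preserved, so the induction hypothesis makes the resulting position a $\mathcal{P}$-position; moves of the form $G \circ K \to G \circ k$ are handled symmetrically with the roles of $H$ and $K$ exchanged. Since the second player can always respond into a $\mathcal{P}$-position, the sum $(G \circ H) + (G \circ K)$ is itself a $\mathcal{P}$-position, which gives $G \circ H = G \circ K$.
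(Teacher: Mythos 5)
Your proposal is correct and follows essentially the same argument as the paper: reduce to showing $(G\circ H)+(G\circ K)$ is a $\mathcal{P}$-position, mirror moves in the $G$-component, and answer moves $H\to h$ (or $K\to k$) by using the fact that $H+K=0$ to find a reply landing on $G\circ h'+G\circ K$ or $G\circ h+G\circ k$ with the equal-pair hypothesis preserved, then invoke the induction hypothesis on the smaller birthday sum. Your version is slightly more careful than the paper's about the $\mathbf{E}$ base cases, but the structure is the same.
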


\begin{proof} Assume, for the sake of contradiction, that there are three games such that this does not hold. Then let us choose $G$, $H$, and $K$ such that the sum of the birthdays of those games is minimal among games that do not follow our theorem. It suffices to show that $G \circ H + G \circ K = 0$. We know by assumption that $H+K=0$. 

\textit{Case 1:} Imagine Player 1 makes a move in $G \circ H$ to $g \circ H$ or in $G \circ K$ to $g \circ K$. Then Player 2 can make the same move in the other $G$ component, leaving Player 1 with $g \circ H + g \circ K$ which is a $\mathcal{P}$-position by assumption. Thus Player 1 has no winning strategy in a $G$ component.

\textit{Case 2:} Imagine Player 1 makes a move in $G \circ H$ to $G \circ h$. Without loss of generality, this reasoning also applies if they make the same type of move in the $K$ component. Since $H+K$ is a $\mathcal{P}$-position, Player 2 has some countermove to $h'+K$ or $h+k$ which are both $\mathcal{P}$-positions and therefore $h'=K$ or $h=k$, respectively. Player 2 can employ this countermove, leaving Player 1 with $G \circ h' + G \circ K$ or $G \circ h + G \circ k$. In either case, by assumption, this is a $\mathcal{P}$-position and thus Player 1 has no winning moves in the $H$ or $K$ component. Thus Player 1 has no winning moves at all, a contradiction.
\end{proof}

\section{Results}

Let us make a preliminary note about the nimbers. Observe that if $G$ is a nimber, then for all $g', g'\in g \in G $ implies that $ g' \in G$. We call this property \textit{hereditary transitivity}.

 Also note that every option of a nimber is a nimber, so this rule recurs. We now have all the tools necessary to prove the major result, the Nimber Extension Rule.

\begin{theorem}[Nimber Extension Rule] Let $G$ be any impartial game and $*n$ be a nimber. Let $H$ be any impartial game.
Then $G \circ H +*n=0 \iff (G+*n) \circ H=0$.
\end{theorem}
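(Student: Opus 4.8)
The plan is to prove the equivalent statement that $G \circ H + *n$ and $(G+*n) \circ H$ always have the same outcome, by strong induction on the quantity $b(G) + b(H) + n$. Writing $A = G \circ H + *n$ and $B = (G+*n)\circ H$, recall that an impartial position is a $\mathcal{P}$-position exactly when none of its options is a $\mathcal{P}$-position; hence it suffices to set up an outcome-preserving correspondence between the options of $A$ and the options of $B$, so that $A$ admits a $\mathcal{P}$-position among its options if and only if $B$ does.

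First I would dispose of the degenerate cases coming from Definition \ref{splitdefinition}. If $H \equiv \mathbf{E}$ then $G \circ H \equiv G$ and $(G+*n)\circ H \equiv G+*n$, so $A$ and $B$ are literally the same game and there is nothing to prove. If instead $G \equiv \mathbf{E}$ while $H \not\equiv \mathbf{E}$, then $A = \mathbf{E} + *n = *n$ whereas $B = *n \circ H$. Here the special behaviour of nimbers is what rescues the statement: $*n$ is a $\mathcal{P}$-position precisely when $n = 0$, i.e.\ precisely when $*n \equiv \mathbf{E}$. When $n = 0$ both $A$ and $B$ collapse to $\mathbf{E}$, and when $n > 0$ the game $A = *n$ is an $\mathcal{N}$-position while $B = *n \circ H$ is also an $\mathcal{N}$-position, since the move sending the pile $*n$ to $*0$ takes $B$ to $\mathbf{E} \circ H \equiv \mathbf{E}$, a $\mathcal{P}$-position.

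For the main case, assume $G \not\equiv \mathbf{E}$ and $H \not\equiv \mathbf{E}$, so that $G+*n \not\equiv \mathbf{E}$ as well. Expanding both split sums by their recursive clause and expanding the disjunctive sums, the options of $A$ are the positions $(G \circ h) + *n$, $(g \circ H) + *n$, and $(G \circ H) + *m$ for $m < n$, while the options of $B$ are $(G+*n)\circ h$, $(g+*n)\circ H$, and $(G+*m)\circ H$ for $m < n$. I would then pair these in the three evident families: $(G\circ h)+*n$ with $(G+*n)\circ h$, $(g\circ H)+*n$ with $(g+*n)\circ H$, and $(G\circ H)+*m$ with $(G+*m)\circ H$ (noting that each such $*m$ is again a nimber, so the theorem does apply). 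Each pair is an instance of the theorem itself with a strictly smaller value of $b(G)+b(H)+n$, obtained respectively by reducing $H$ to $h$, reducing $G$ to $g$, or reducing $n$ to $m$; so the induction hypothesis shows that the two members of each pair share an outcome. Consequently $A$ has a $\mathcal{P}$-position among its options if and only if $B$ does, whence $o(A) = o(B)$ and the induction closes.

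The step I expect to be the crux is not the main recursion but the case $G \equiv \mathbf{E}$. The difficulty there is a genuine asymmetry built into the split sum: collapsing $G$ to $\mathbf{E}$ freezes the $H$-component on the left, since $\mathbf{E}\circ H \equiv \mathbf{E}$, but not on the right, since $(G+*n)\circ H = *n \circ H$ still permits moves in $H$ whenever $n>0$. The equivalence survives this asymmetry only because, for a nimber, being a $\mathcal{P}$-position coincides with being empty. Indeed, one can check that replacing $*n$ by an arbitrary $\mathcal{P}$-position $K$ with $K \not\equiv \mathbf{E}$ already breaks the statement, which is precisely why the hypothesis that the added game is a nimber cannot be dropped.
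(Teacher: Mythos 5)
Your proof is correct, but it takes a cleaner and structurally different route than the paper's. You prove the two-sided statement $o(G\circ H + *n) = o((G+*n)\circ H)$ directly, by induction on $b(G)+b(H)+n$, via an outcome-preserving matching of the three families of options on each side; the nimber hypothesis enters only in the degenerate case $G\equiv\mathbf{E}$ (where one needs that the only nimber that is a $\mathcal{P}$-position is $\mathbf{E}$) and in the mild observation that options of nimbers are nimbers, which keeps the third family of pairs inside the scope of the induction. The paper instead proves only the left-to-right implication by a minimal-counterexample argument over three cases of options, and in its Case 3 it must invoke the ``hereditary transitivity'' of nimbers (an option of an option of $*n$ is an option of $*n$) to rule out a troublesome subcase; it then recovers the right-to-left implication by a separate Sprague--Grundy argument, noting that two distinct $\mathcal{P}$-positions cannot be options of one another. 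Your symmetric induction buys you both implications at once, dispenses with the Sprague--Grundy detour and with hereditary transitivity, and isolates exactly where being a nimber matters --- your closing observation that an arbitrary nonempty $\mathcal{P}$-position $K$ breaks the statement (e.g.\ $K \equiv *1+*1$, $G\equiv\mathbf{E}$, $H\equiv *1$) makes the necessity of the hypothesis explicit, which the paper addresses only indirectly through its later theorem on nimber-like games. The one thing the paper's longer route surfaces that yours does not is the hereditary transitivity property itself, which the authors go on to study in its own right.
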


\begin{proof} We prove the left-right implication first. To do so, we assume a contradiction of smallest birthday-sum. Then, the right-left implication follows quickly from the left-right implication. 

First, let us prove the left-right implication. We want to show the left-right implication, that is:
\begin{gather*}
    G \circ H +*n=0 \text{ implies that } (G+*n) \circ H=0. 
\end{gather*}

First, consider the base case where $G \equiv \textbf{E}$. Then $ *n \equiv \textbf{E}$ and the implication can be seen from the fact that both its expressions are simply $ G \circ H$.

Consider the base case where $H \equiv \textbf{E}$. Since  for all $G$,$ \textbf{ } G \circ \textbf{E} \equiv G $ by definition, the implication can be seen from the fact that both its expressions are simply $ G+*n$.

Observe that if $G \circ H$ is a $\mathcal{P}$-position, $*n \equiv \textbf{E}$. Then the implication can be seen in the same way as when $ G \equiv \textbf{E}$.

Having finished that first step, let us rewrite the desired statement as ``If $G \circ H +*n$ is a $\mathcal{P}$-position, then $(G+*n) \circ H$ is a $\mathcal{P}$-position." A game being a $\mathcal{P}$-position is equivalent to all its options being $\mathcal{N}$-positions, so we rewrite again, subdividing the options of each game into three categories which encompass all the games options.

Now, we claim that if all options of the forms $G \circ h +*n$, $g \circ H+*n$, and $G \circ H +*n'$ are all $\mathcal{N}$-positions, then all options of the forms $ ( G + *n ) \circ h $, $(g + *n) \circ H$, and $(G+*n') \circ H $ are $\mathcal{N}$-positions. As we explained in the previous paragraph, proving this statement suffices to show the desired result.

Note the correspondence between the three cases given to be $\mathcal{N}$-positions with the three desired cases. We prove them in the order that they are written. Assume, for the sake of contradiction, that the theorem does not hold. We choose $G$ and $H$ such that their birthday is minimal among pairs of games such that the theorem does not hold. By our base cases, both have at least one option and $*n \neq \textbf{E}$.

\textit{Case 1:} We already are given that $G \circ h+*n$ is an $\mathcal{N}$-position, and we want to show that $(G+*n) \circ h$ is an $\mathcal{N}$-position. The first game, by virtue of being an $\mathcal{N}$-position, has at least one $\mathcal{P}$-position as an option. That option will be of the form $g \circ h +*n$, $G \circ h' +*n$, or $G \circ h +*n'$. To show that the second game is an $\mathcal{N}$-position, it suffices to show that it has one $\mathcal{P}$-position as an option, which may be of the form $(g+*n) \circ h$, $(G+*n) \circ h'$, or $(G+*n') \circ h$.

Of three games that are options of the guaranteed $\mathcal{N}$-position, they all have lesser birthday-sum than $G$ and $H$, thus our theorem holds on them. Therefore, whichever one is a $\mathcal{P}$-position, it implies one of latter three games is a $\mathcal{P}$-position by the theorem. This suffices to show that the first category of the options of $G+*n \circ H$ consists of $\mathcal{N}$-positions, the first case in proving the left-right implication.

\textit{Case 2:} This is very similar to Case 1. We are given that $g \circ H +*n$ is an $\mathcal{N}$-position and we want to prove that $(g +*n) \circ H $ is an $\mathcal{N}$-position. Now, we rewrite the statements the same way as we did in Case 1. We now know that at least option of the form $g' \circ H + *n$, $g \circ h + *n$, or $g \circ H + *n'$ is a $\mathcal{P}$-position. It suffices to show that at least one of $g'+*n \circ H$, $g + *n \circ h$, or $g + *n' \circ H$.

Of three games that are options of the guaranteed $\mathcal{N}$-position, they all have lesser birthday-sum than $G$ and $H$, thus our theorem holds on them. Therefore, whichever one is a $\mathcal{P}$-position, it implies one of latter three games is a $\mathcal{P}$-position by the theorem. This suffices to show that the second category of the options of $G+*n \circ H$ consists of $\mathcal{N}$-positions, the second case in proving the left-right implication. Note that this logic is identical to that of Case 1.

\textit{Case 3:} Now, to prove the left-right implication of the theorem, it suffices to show that all options of the form $(G+*n') \circ H $ are $\mathcal{N}$-positions. We are given the fact that all options of the form $G \circ H+*n'$ are $\mathcal{N}$-positions. Using the same method as before, we break each of those statements into categories and rewrite them as statements about $\mathcal{P}$-positions.

We can now claim that at least one of the games of the form $G \circ h +*n'$, $g \circ H +*n'$, or $G \circ H+*n``$ is a $\mathcal{P}$-position. We want to show that at least one of the games of the form $G \circ h+*n'$, $(g+*n') \circ H$, or  $(G+*n``) \circ H$ is a $\mathcal{P}$-position.

The third subcase of the given actually can never happen. As we mentioned earlier, an option of an option of a nimber is also an option of that nimber, so any option of the form $G \circ H + *n``$ is also of the form $G \circ H + *n'$ which we know to be an $\mathcal{N}$-position because it is an option of $G \circ H +*n$ which is a $\mathcal{P}$-position by assumption.

Thus, we now have that at least one of the games of the form $G \circ h +*n'$ or $g \circ H +*n'$ is a $\mathcal{P}$-position. We want to show that at least one of the games of the form $(G+*n') \circ h$, $(g+*n') \circ H$, or $(G+*n``) \circ H$ is a $\mathcal{P}$-position.

It now suffices to show that at least one of the games of the form $G+*n' \circ h$ or $(g+*n') \circ H$ is a $\mathcal{P}$-position. We can use the same logic as the previous two cases to show this. This completes Case 3, and thereby the left-right implication of the equality. 

Now that we are done with the left-right implication of the equality, we can use it to prove the right-left implication of the equality. We write the right-left implication as follows.
\begin{gather*}
    G \circ H +*n =0 \text{ is implied by } (G+*n) \circ H=0. 
\end{gather*}

Assume, for the sake of contradiction, that $(G+*n) \circ H=0 $  but $G \circ H+*n \neq0$. $G \circ H $ is an impartial game, so by the Sprague-Grundy Theorem \cite{conway} there exists $ *m $ such that $ G \circ H+*m=0. $ If $ *m=*n, $ contradiction.

 If $*m \neq *n$, then we have $ G \circ H+*m=0$ and therefore by the left-right implication $ (G+*m) \circ H=0$. We also have $ (G+*n) \circ H=0$. Since $ *m \neq n$, one of those two games is an option of the other, despite the fact that they are both $\mathcal{P}$-positions. This is a contradiction. 
\end{proof}

Now, let us clarify some language and use it to prove a corollary.

Let $G$ be a game of \textsc{Nim} and $H$ an impartial game, such that $G \circ H $ is an $\mathcal{N}$-position. $G$ is a game of \textsc{Nim} and is thereby composed of piles, so we can write out $G$ as $(a, b, \ldots)$. We assume that $ G \circ H $ has two different \textit{winning moves} (moves to $\mathcal{P}$-positions) to
$ (a', b, \ldots) \circ H $ and $ (a, b', \ldots) \circ H. $ Effectively, we are assuming that there are at least two piles from which a player can remove stones to create a $\mathcal{P}$-position.  Note that since only one pile gets changed in a move, the game's other piles which we omit with an ellipsis are identical. Having defined these terms, we state that

\begin{corollary}
\label{sumrule}
    $*a'+*a=*b'+*b.$
\end{corollary}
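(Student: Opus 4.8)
The plan is to reduce everything to the single tool we have, the Nimber Extension Rule, by \emph{peeling off} the pile that each winning move alters. Write the \textsc{Nim} position as a disjunctive sum of its piles, $G \equiv *a + *b + R$, where $R$ collects the common piles hidden in the ellipsis. The two winning moves give two $\mathcal{P}$-positions, which we record as
\[
(*a' + *b + R) \circ H = 0 \qquad \text{and} \qquad (*a + *b' + R) \circ H = 0 .
\]
Regrouping the left-hand \textsc{Nim} game so that the altered pile sits on the outside — an honest equivalence, since it only reassociates piles and never fuses two of them — the first reads $((*b + R) + *a') \circ H = 0$, and the Nimber Extension Rule peels the pile off to give $(*b + R) \circ H + *a' = 0$; that is, the game with pile $a$ deleted has Grundy value $a'$. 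Symmetrically the second move yields that the game with pile $b$ deleted has Grundy value $b'$.

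First I would try to show that each winning move certifies the \emph{same} quantity, namely the Grundy value of $G \circ H$ itself, exactly as happens in ordinary \textsc{Nim}, where a winning move in a pile of size $c$ to size $c'$ forces $c \oplus c'$ to equal the total Grundy value. Concretely, I would let $*w$ be the Grundy value of $G \circ H$, unfold $G \circ H + *w = 0$ through the Nimber Extension Rule (again peeling one pile at a time), and attempt to match the result against the two peeled identities above so as to conclude $a \oplus a' = w = b \oplus b'$, which is precisely $*a' + *a = *b' + *b$. Every manipulation in this chain is a single application of the Nimber Extension Rule together with reassociation of piles, so the skeleton of the argument is short.

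The hard part will be the step that silently converts a sum such as $*a + *(a \oplus a')$ back into the single pile $*a'$ \emph{inside} the left argument of a split sum. Replacing a sum of nimbers by the equal-valued nimber is routine for disjunctive sums, but split sums have only been shown to respect $\equiv$ (structural equivalence, as in the right-argument substitution theorem), whereas this substitution is a mere equality of values and not an equivalence; indeed split sums are visibly not well behaved under left-equality, which is the very reason the pass needed its own notation in the first place. The genuine obstacle is therefore to justify that, in the presence of a winning pile move, this left-substitution is legitimate — equivalently, to pin down how the Grundy value of $(\,\cdot\,) \circ H$ responds to altering a single pile of the left factor. I expect this to be the crux, and the place where either an additional hypothesis on the two deleted-pile games or a sharpened, genuinely non-linear refinement of the Nimber Extension Rule is needed to push the argument through.
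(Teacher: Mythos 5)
Your opening moves are valid and take a genuinely different route from the paper: applying the Nimber Extension Rule to $((*b+R)+*a')\circ H=0$ and $((*a+R)+*b')\circ H=0$ really does give that $(*b+R)\circ H$ has Grundy value $a'$ and $(*a+R)\circ H$ has Grundy value $b'$. But the step you flag as the crux is not merely hard to justify --- the quantity you propose to match both winning moves against is the wrong one. Writing $w$ for the Grundy value of $G\circ H$, the identity $a\oplus a'=w$ is false. Take $G=(2,2)$ and $H=*1$, so $G\circ H=(2,2)^*$. The two winning moves both land on the $\mathcal{P}$-position $(1,2)^*$, one from each pile, so $a=b=2$, $a'=b'=1$, and $a\oplus a'=b\oplus b'=3$ (the corollary holds); but the options of $(2,2)^*$ are $(2,2)$, $(1,2)^*$, and $(2)^*$, with Grundy values $0$, $0$, and $1$, so $w=\operatorname{mex}\{0,1\}=2\neq 3$. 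The underlying failure is exactly the non-linearity you half-identify: the Nimber Extension Rule only detects \emph{when} $(G+*n)\circ H$ is a $\mathcal{P}$-position; it does not yield $\mathrm{Gr}((G+*n)\circ H)=\mathrm{Gr}(G\circ H)\oplus n$, and that statement is false in general (e.g.\ $\mathrm{Gr}((1)^*)=2$ while $\mathrm{Gr}((1,1)^*)=1$, not $3$). So no sharpening of the left-substitution step can rescue this particular chain; the common value $a\oplus a'$ of the corollary is simply not $\mathrm{Gr}(G\circ H)$.

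The paper's proof avoids Grundy values of split sums altogether. It considers the single game
\[
(a',b,\ldots)\circ H+(a,b',\ldots)\circ H+*a'+*a+*b'+*b
\]
and argues it is a $\mathcal{P}$-position by mirroring: each of the piles $a$, $a'$, $b$, $b'$, each pile hidden in the ellipsis, and each copy of $H$ occurs exactly twice across the summands, so the second player can always repeat the first player's move in the twin component. Since the two split-sum summands are $\mathcal{P}$-positions by hypothesis, subtracting their sum leaves $*a'+*a+*b'+*b=0$, which is the claim. The moral is that the corollary compares the two winning moves \emph{to each other}, not each one to $\mathrm{Gr}(G\circ H)$; your peeled identities are correct and may be useful elsewhere, but to close this argument you need a pairing of the two $\mathcal{P}$-positions of the kind the paper uses.
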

That is to say, the nim-sum of $a$ and the winning move from $a$ will be the same as the nim-sum of $b$ and the winning move from $b$.

\begin{proof} Consider the following game.
\begin{gather*}
    (a', b, \ldots) \circ H +(a, b', \ldots) \circ H + *a'+*a+*b'+*b
\end{gather*}

Observe that that this game is completely symmetrical. If Player 1 makes a move in any component, Player 2 can make the same move in the corresponding component, leaving another game of this form. When all components are E, the game is a $\mathcal{P}$-position. There are exactly 2 of every individual component, and this information is sufficient to declare this game a $\mathcal{P}$-position. This is an informal writing of a standard ``tweedle-dee, tweedle-dum" symmetry argument. 

By assumption, we know $(a', b, \ldots) \circ \textbf{ } H $ and $ (a, b', \ldots) \circ H $ are both $\mathcal{P}$-positions, so their sum is also a $\mathcal{P}$-position. Thus we can write,

\begin{gather*}
        (a', b, \ldots) \circ H +(a, b', \ldots) \circ H + *a'+*a+*b'+*b = 0.
\end{gather*}
\begin{gather*}
            (a', b, \ldots) \circ H + (a, b', \ldots) \circ H = 0.
\end{gather*}

Subtracting the second equation from the first yields $*a'+*a+*b'+*b=0$, which is equivalent to the desired result.
\end{proof}

Before we proved the Nimber Extension Rule, we noted the property of Nimbers that we called hereditary transitivity that later ended up being surprisingly pivotal in the proof. This property is very rare among games, as we will soon show.

We say that a game is \textit{nimber-like} if the game has hereditary transitivity and all of its suboptions are also nimber-like. (That is to say, a game is nimber-like if it has hereditary transitivity and all its options have hereditary transitivity and all their suboptions have hereditary transitivity, and all their suboptions have hereditary transitivity, and so on.)

\begin{theorem} If a game $G$ is nimber-like and finite, then $G\equiv *n$ for some nimber $*n$. That is to say, $G$ is a nimber.
\end{theorem}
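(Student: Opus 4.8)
The plan is to induct on the birthday $b(G)$. Since $G$ is finite its game tree is finite, so this induction is well-founded and every reachable position has only finitely many options. The base case is $G \equiv \mathbf{E}$, which is the nimber $*0$, so the claim holds there. For the inductive step, suppose $G$ is nimber-like and finite with at least one option. By the definition of nimber-like, every option of $G$ is again nimber-like, and each option is finite with strictly smaller birthday; hence the inductive hypothesis applies to it, and every option $g \in G$ satisfies $g \equiv *a_g$ for some nimber $*a_g$. Thus, up to equivalence, the options of $G$ form a finite set of nimbers.

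Next I would use hereditary transitivity to force that set to be an initial segment of the nimbers. Let $*m$ be a largest nimber occurring among the options; finiteness guarantees such a maximum exists, and let $g \equiv *m$ be the corresponding option. Since $*m$ has options $*0, *1, \ldots, *(m-1)$, equivalence gives that the options of $g$ are equivalent to $*0, *1, \ldots, *(m-1)$ in one-to-one fashion. Hereditary transitivity then says that each such suboption $g'$, being an option of the option $g$ of $G$, is itself an option of $G$. Therefore $G$ has options equivalent to each of $*0, *1, \ldots, *(m-1)$, together with the option equivalent to $*m$. Conversely, every option of $G$ is equivalent to some $*a_g$ with $a_g \le m$. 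Hence the set of options of $G$, taken up to equivalence, is exactly $\{*0, *1, \ldots, *m\}$, which is precisely the option set of $*(m+1)$, and so $G \equiv *(m+1)$.

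\textbf{Main obstacle.} The only place the hypothesis does real work is in ruling out \emph{gaps} in the option set: hereditary transitivity is exactly what forbids a configuration such as options equivalent to $\{*0, *2\}$, because the presence of $*2$ as an option immediately drags $*1$ in as an option as well, forcing downward closure. Showing the set is therefore an uninterrupted initial segment $\{*0, *1, \ldots, *m\}$ is the crux of the argument. The remaining difficulty is bookkeeping between ordinary equality and the paper's strict notion of equivalence introduced in Section~1: one must consistently track options up to $\equiv$ (collapsing any duplicate options that are equivalent to the same nimber) so that the resulting option set genuinely matches the option set of a single nimber under the ``exactly one equivalent element'' clause. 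This step is routine but is where the distinction between $=$ and $\equiv$ must be handled carefully to complete the identification $G \equiv *(m+1)$.
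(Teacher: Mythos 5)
Your proposal is correct and follows essentially the same route as the paper: options are nimbers by induction (the paper phrases it as a minimal counterexample), the largest option $*m$ together with hereditary transitivity forces the option set to be the initial segment $\{*0,\ldots,*m\}$, giving $G \equiv *(m+1)$. Your treatment of the base case and the $=$ versus $\equiv$ bookkeeping is somewhat more careful than the paper's, but the argument is the same.
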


\begin{proof} Assume the theorem is false. Then there is some $G$ of minimal birthday that is nimber-like but is not a nimber. All of its options are nimber-like (by definition of the term) and therefore nimbers (by assumption.) Let $*k$ be the nimber of largest magnitude that is an option of $G$. Since $*k$ is a nimber, it contains all nimbers smaller than it. Since $G$ has hereditary transitivity (because it is nimber-like), it contains all of the options of $*k$, and therefore all of the nimbers up to $*k$. These are all of the elements of $G$, and therefore $G \equiv $ the nimber of magnitude $k+1$.
\end{proof}

This result shows that our proof of the Nimber Extension Rule can not be extended to cover games other than nimbers, since its necessary property is sufficient to characterize the nimbers if the property holds inductively. It is worth noting that this result can be extended to infinite nimbers by simply replacing ``the largest nimber" with an infinite ascending sequence of nimbers.

 Now that we have proven our results in the language of split sums, we can use them to investigate the problem that motivated them in the first place. We now show a novel result in the study of \textsc{Nim with a Pass} as well as novel proofs of older results.

The fundamental questions about \textsc{Nim with a Pass} are the same as those for any other impartial game: quickly determine whether an arbitrary position is an $\mathcal{N}$-position or a $\mathcal{P}$-position, quickly determine the winning move from an arbitrary $\mathcal{N}$-position, and quickly determine the Grundy value of an arbitrary position. The third task is usually the most difficult, since it obviously encompasses the first task and a solution to it usually quickly creates a solution to the second.

The first two tasks are completely solved when the underlying game of \textsc{Nim} only has two piles \cite{atlas} or has exclusively piles of size four or below \cite{map}. None of the tasks are solved in games of three or more piles with at least one pile of size greater than four.

The Nimber Extension Rule is the most important result in this paper because it makes slight progress on the solution of \textsc{Nim with a Pass} with three piles.

Imagine we have a game of two-pile \textsc{Nim with a Pass} $(a,b)^*$. It can also be written $(a,b) \circ *1$ but the former notation is more compact. By the Sprague-Grundy theorem, $(a,b)^*$ has some Grundy value $c$ which is a nimber. Then,
\[
    (a,b)^*+*c=0.
\]

By the Nimber Extension Rule, we have
\[
        (a,b)^*+*c=0 \iff (a,b,c)^*=0.
\]

Therefore,
\[
    (a,b,c)^*=0.
\]

Thus, finding the Grundy values of two-pile \textsc{Nim with a Pass} suffices to find the $\mathcal{P}$-positions of three-pile \textsc{Nim with a Pass} (and vice versa). 

This is progress on finding the $\mathcal{P}$-positions of \textsc{Nim with a Pass} with three or more piles, since finding the Grundy vales of the two-pile game may be easier. Additionally, if someone were to find a way to quickly generate the Grundy values of \textsc{Nim with a Pass} positions from its $\mathcal{P}$-positions, this could be converted into a fast algorithm to solve \textsc{Nim with a Pass} with an arbitrary number of piles. However, this task is very difficult for games in general and is likely infeasible.

Corollary \ref{sumrule} may also be useful in a clever algorithm designed to solve \textsc{Nim with a Pass}. Given a known $\mathcal{N}$-position and a known winning move from it, the corollary constrains all possible other winning moves
from it. If other results about the structure of \textsc{Nim with a Pass} are found, this could be used in combination with them to find $\mathcal{P}$-positions.

The analysis of split sums also offers alternate proofs of many existing results in the study of \textsc{Nim with a Pass}. For example, it is well known that that the only (nontrivial) $\mathcal{P}$-positions of two-pile \textsc{Nim} with the Pass are the pairs of the form $(a, a+1)^*$ for odd $a$. This is normally proved by a long induction on cases. We present a proof sketch of a shorter but still tedious alternate proof using the Nimber Extension Rule.

Firstly, one proves by a quick induction that the Grundy value of a game of \textsc{Nim with a Pass} consisting of a single pile of odd size is the pile's size plus 1, and similarly the value of an even pile is that pile's size minus 1. Then, one uses the Nimber Extension Rule on that (the same way we did a few paragraphs ago) to deduce that $(a, a+1)^*$ is a $\mathcal{P}$-position for odd $a$. Showing that those (and $(0,0)$) are the only $\mathcal{P}$-positions just takes some easy casework showing that all positions of other forms are $\mathcal{N}$-positions.

It is also well known that $(G^*)^*=G$. That is to say, $G$ with a pass with a pass equals $G$. This is normally proved by a standard symmetry argument, but the use of split sums offers an alternate proof that more efficiently encapsulates the same reasoning.

$(G^*)^* = (G \circ *1) \circ *1$ by Definition \ref{passdefinition} and Definition \ref{splitdefinition}.

$(G \circ *1) \circ *1 = G \circ (*1 + *1)$ by Theorem \ref{Theorem 2.2}.

$G \circ (*1 + *1)= G \circ \textbf{E} = G$ by Theorem \ref{strangeassociativity} and then Definition \ref{splitdefinition}.

While this paper, motivated by \textsc{Nim with a Pass}, developed the idea of the split sum, there are other ways to generalize \textsc{Nim with a Pass}. It is likely that many other fruitful avenues of research on the problem exist. The author believes a fast solution to \textsc{Nim with a Pass} exists based on an as of yet not understood structure in the problem because the Nimber Extension Rule (and its corollary) show that split sums obey very weak constraints similar to those obeyed by traditional \textsc{Nim}, and cleverer analysis may yield more powerful results or more powerful applications of existing ones.

\section{Acknowledgements}
The author would like to thank Simon Rubinstein-Salzedo  for introducing him to the problem, teaching him the necessary tools to investigate it, and assisting him with this paper's formatting and style. He would also like to thank Rachana Madhukara for helpful comments.

\printbibliography

\end{document}